\newtheorem{theorem}{Theorem}[section]
\newtheorem{theorem*}{Theorem*}
{\tiny {\normalsize }}
\newcommand{\seqnum}[1]{\href{https://oeis.org/#1}{\underline{#1}}} 
\begin{document}

\title{Integer Sequences: Irregular Arrays 
\\ and Intra-Block Permutations}

\author{Boris Putievskiy}
\date{October 27, 2023}
\maketitle
\begin{abstract}  This article investigates integer sequences that partition the sequence into blocks of various lengths - irregular arrays. The main result of the article is explicit  formulas for numbering of irregular arrays. A generalization of Cantor diagonal method is proposed. We also define and describe intra-block permutations of natural numbers. Generalizations of reluctant sequences are introduced, namely generalized reluctant sequences and generalized reverse reluctant sequences. Explicit formulas are presented for these sequences. The article provides numerous examples to illustrate all statements.
\end{abstract}

\tableofcontents
\section{Introduction}
Denote the set of integers by $\mathbb{Z}$, the set of nonnegative integers by $\mathbb{Z}^{*}$, the set of positive integers by $\mathbb{Z}^{+}$, the set of positive real numbers by $\mathbb{R}^{+}$.  Denote the set of integer sequences by $\mathcal{A}$ and the set of positive integer sequences by $\mathcal{A}^{+}$.
\\A pairing  function is a function that reversibly maps $\mathbb{Z}^{+}$ x $\mathbb{Z}^{+}$ $\rightarrow$ $\mathbb{Z}^{+}$.
A permutation of natural numbers  is bijective map $\mathbb{Z}^{+}$ $\rightarrow$ $\mathbb{Z}^{+}$. 
\\ A block (or segment) of a sequence is any set of the consecutive terms of the form $(a_{k+1},a_{k+2},a_{k+3},\: ... \: a_{k+m})$, where $k\in \mathbb{Z}^{*},\:\: m \in \mathbb{Z}^{+}$, and $m$  is the length of the block.
\\Throughout this paper, we will refer to sequences by their $Annnnn$ numbers, as found in the Online Encyclopedia of Integer Sequences [1]. Denote the sequence of natural numbers $(1,2,3,...)\:\: \seqnum{A000027} \:\: \text{by}\:\: \xi $.
%
%
\section{Partitions of the Set of Positive Integers}
{\bf Definition 2.1.}
Let a sequences $\alpha$: $a_{1}, a_{2}, a_{3},...\in \mathcal{A}$ \text{and} $\beta$: $b_{1}, b_{2}, b_{3},...$ $\in$ $\mathcal{A}^{+}$.
\\The sequence $\beta$ partitions the sequence $\alpha$ of into blocks of lengths $b_{1}, b_{2}, b_{3},...$. The sequence $\alpha$ is written as irregular array read by rows:
$\\ a_{1}, a_{2}, ... a_{b_{1}},$ 
$\\ a_{b_{1}+1}, a_{b_{1}+2}, ... a_{b_{1}+b_{2}},$
$\\ a_{b_{1}+ b_{2}+1},a_{b_{1}+ b_{2}+2}, ... a_{b_{1}+b_{2}+b_{3}},$
$\\ \quad .\quad .\quad.\\$
The sequence $\beta$ is called partitioning sequence. We use two parameters to number the terms of an irregular array $L(n)$ and $R(n)$. Where $L(n)$ represents the block number, and $R(n)$  indicates the position within the block from left to right. Thus
$\\ (1,1),(1,2), ... (1,b_{1}),$ 
$\\ (2,1),(2,2), ... (2,b_{2}),$ 
$\\ (3,1),(3,2), ... (3,b_{3}),$
$\\ \quad .\quad .\quad.\\$
Denote by $B(s)=b_{1}+b_{2}+...+b_{s}$ partial sums $\beta$, 
\begin{equation}
B(s)=0, \quad B(s-1)+b_{s}=B(s).
\end{equation} 
Let $L(0)=0, \:\: \text{for} \:\:  n \geq 1 \:\: \text{we get}$: 
\begin{equation}
R(n)=n - B(L(n)-1).
\end{equation} 
Denote by $R^{'}(n)$ the position within the block from right to left. Then 
$$\\R^{'}(n) = B(L(n))+1-n, \:\: R(n)+R^{'}(n)=b_{L(n)}+1.$$ 
Using (2), we can derive a formula for the inverse problem: how to calculate the number of terms if the values of the functions $L$ and $R$ are known. 
$$\\n=B(L-1)+R.$$
Let the sequences $\beta = \xi$, then a sequence $\alpha$ is written as regular array read by rows:
$\\ a_{1}, $ 
$\\ a_{1}, a_{2},$
$\\ a_{1},a_{2},a_{3},$
$\\ \quad .\quad .\quad.\\$
These formulas are commonly known  $\seqnum{A003056}, \:\: \seqnum{A002260}, \:\:  \seqnum{A004736}.$
Row numbering of a regular array starts from 0: $t=\lfloor \dfrac{\sqrt{8n-7}-1}{2} \rfloor.$
\\Then $ L(n)=t+1,$
$$\\R(n)= n-\dfrac {t(t+1)}{2}, \:\: R^{'}(n)= \dfrac {(t+1)(t+2)}{2}+1-n,\:\: R(n)+R^{'}(n)=t+2.$$
\begin{theorem}  
Let $ x(n): \mathbb{Z}^{+} \rightarrow \mathbb{R}^{+}\: \text{and} \:\: x(n) \:\: \text{is the largest  root of the equation} 
\\B(x)=n.$ Then 
\begin{equation}
	L(n)=\lceil x(n) \rceil.
\end{equation}
\end{theorem}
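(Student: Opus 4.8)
The plan is to convert the defining property of the block number $L(n)$ into a two-sided inequality for $B$ and then extract the ceiling. From Definition 2.1, the $s$-th block of the array consists exactly of the terms indexed $B(s-1)+1, B(s-1)+2, \ldots, B(s)$; hence for every $n \ge 1$ the block containing term $n$ is characterized by
$$B(L(n)-1) < n \le B(L(n)).$$
I would record this as the starting point. Next I would note that, since $\beta \in \mathcal{A}^{+}$ forces every $b_i \ge 1$, the partial sums satisfy $B(0)=0 < B(1) < B(2) < \cdots$; and, as the statement tacitly requires in order for ``the largest root'' to make sense, I would assume that the function $x \mapsto B(x)$ on $\mathbb{R}^{+}$ is continuous, strictly increasing, and interpolates these partial sums at the positive integers (this holds for the concrete $B$ arising in the examples, e.g.\ $B(x)=x(x+1)/2$ for $\beta=\xi$). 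Under this hypothesis $B(x)=n$ has a unique positive root, which is $x(n)$.

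The heart of the proof is to sandwich $x(n)$ between two consecutive integers. Since $B(L(n)-1) < n = B(x(n))$ and $B$ is strictly increasing, we get $x(n) > L(n)-1$. Since $n \le B(L(n))$, strict monotonicity gives $B(x) > B(L(n)) \ge n$ for every $x > L(n)$, so no root exceeds $L(n)$ and therefore $x(n) \le L(n)$. Combining, $L(n)-1 < x(n) \le L(n)$. If $x(n)=L(n)$ then $\lceil x(n)\rceil = L(n)$ trivially; if $x(n)<L(n)$ then, being $> L(n)-1$, it again satisfies $\lceil x(n)\rceil = L(n)$. In both cases $\lceil x(n)\rceil = L(n)$, which is equation (3). (The existence of a root in the half-open interval $(L(n)-1, L(n)]$ also follows directly from the intermediate value theorem applied with $B(L(n)-1) < n \le B(L(n))$.)

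I expect the only genuine obstacle to be conceptual rather than computational: the theorem does not pin down which real-valued extension of the a priori integer-valued $B$ is intended, so the proof must first isolate the hypotheses --- continuity and strict monotonicity on $\mathbb{R}^{+}$, together with agreement with the partial sums at the integers --- that make ``the largest root'' both well defined and equal to the unique positive root; once that is in place, the inequality sandwich and the ceiling bookkeeping are routine. A minor point requiring care is the boundary case $n=B(L(n))$, where $x(n)$ is exactly the integer $L(n)$ and one must confirm that the ceiling returns $L(n)$ rather than $L(n)+1$.
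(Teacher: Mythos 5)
Your proposal is correct and is essentially the paper's own argument: both rest on the strict monotonicity of (a real-valued extension of) $B$ to trap $x(n)$ in the interval $(L(n)-1,\,L(n)]$ and then read off $\lceil x(n)\rceil = L(n)$, the paper simply writing out the resulting chain of inequalities block by block rather than for a generic $n$. Your explicit isolation of the tacit hypotheses (continuity, strict monotonicity, and agreement with the partial sums at integer arguments of the real extension of $B$) is a point the paper glosses over, but it does not change the route of the proof.
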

\begin{proof}
	By definition $B(0) = 0, \: B(1) =b_{1}, \: B(2) =b_{1}+b_{2}, \: ...$ The function $B(n) \:$ is strictly increasing. Therefore
	\begin{align*}
		0 &< x(1) < x(2) < ... < x(b_1) = 1, \\
		1 &< x(b_1 + 1) < x(b_1 + 2) < ... < x(b_1 + b_2) = 2, \\
		2 &< x(b_1 + b_2 + 1) < x(b_1 + b_2 + 2) < ... < x(b_1 + b_2 + b_3) = 3,
	\end{align*}
	$$\cdots$$
	We obtain
			\begin{align*}
				 & \lceil x(1) \rceil=1,	\lceil x(2) \rceil=1,	 ... ,	\lceil x(b_{1}) \rceil=1, \\
				 & \lceil x(b_{1}+1) \rceil=2,		\lceil x(b_{1}+2) \rceil=2,		... ,		\lceil x(b_{1}+b_{2}) \rceil=2,\\
				 & \lceil x(b_{1}+b_{2}+1) \rceil=3, \lceil x(b_{1}+b_{2}+2) \rceil=3, ... ,  \lceil x(b_{1}+b_{2}+b_{3}) \rceil=3,
			\end{align*}
			$$\cdots$$
\end{proof}

Let $L(n)$  be the number of block of the sequence $\:\:\beta: 
b_{1}, b_{2}, b_{3},... \in \mathcal{A}^{+} 
\\ \text{and}\:\: m \in \mathbb{Z}^{+},\:\: m>1.$ The following properties hold.
\\\textbf{(P2.1.)} The number of the block of the sequence $:\: mb_{1}, mb_{2}, mb_{3},...$ is
$L(u)$,
\\ $\text{where}\:\: u=\Bigl\lfloor \frac{n-1}{m} \Bigr\rfloor +1.$
\\\textbf{(P2.2.)} Let the sequence $\beta: \:\: b_{s}=0\:\:\text{mod}\:\: m \:\: \text{for}\:\: s \geq 1.$  The number of the block of the sequence $\:\: \dfrac{b_{1}}{m}, \dfrac{b_{2}}{m}, \dfrac{b_{3}}{m},...$ is $\:\: L(mn)$.
\\\textbf{(P2.3.)} Let a sequences $\widetilde{\beta}$ be the union of $m$ rows of the sequence $\beta$:
\\$\widetilde{b_{1}}=b_{1}+b_{2}+... \: b_{m},\:\: \widetilde{b_{2}}=b_{m+1}+b_{m+2}+... \: b_{2m}, 
\:\: \widetilde{b_{3}}=b_{2m+1}+b_{2m+2}+... \: b_{3m},\:\: ...$
\\Then $ \widetilde{L}(n)=\lfloor\dfrac {L(n)+m-1}{m}\rfloor. $ 
\\\\Let's examine some special cases of the sequence  $\beta$.
\\\\{\bf Example 2.0.} Let $p_{0} \in \mathbb{Z}^{+}, \: b_{s}=p_{0}$. Using (1), (2) and (3) we get
$$\\B(s) = p_{0}s, \quad x(n)=\dfrac {n}{p_{0}}, \quad L(n)=\Bigl\lceil \dfrac {n}{p_{0}} \Bigl\rceil,$$
$$\\R(n)= n- p_{0}(L(n)-1).$$
\\{\bf Example 2.1.} Let the partitioning sequence $\beta$ is linear function 
$b_{s}=p_{1}s + p_{0}, 
\\\text{where} \:\: p_{0} \in \mathbb{Z}, \:\: p_{1}  \in \mathbb{Z}^{+}$.
Using (1), (2) and (3) we get
$$\\B(s) = p_{1}\dfrac {s(s+1)}{2}+p_{0}s.$$ 
\begin{equation}  
	L(n)=\Bigl\lceil \dfrac{-2p_{0}-p_{1}+\sqrt{8 n p_{1}+(2p_{0}+p_{1})^2}}{2p_{1}}\Bigl\rceil.	
\end{equation}
$$\\R(n)= n - p_{1}\dfrac {(L(n)-1)L(n)}{2}+p_{0}(L(n)-1),$$  
\\\\Let $p_{1}=5 \:\: \text{and} \:\: p_{0}=2, \:\: \text{then} \:\:  L(n)=\lceil\sqrt{n+9}-3\rceil:$ 
$\\1,1,1,1,1,1,1,$
$\\2,2,2,2,2,2,2,2,2,$
$\\3,3,3,3,3,3,3,3,3,3,3,$ 
$\\ \quad .\quad .\quad.\\$
\\{\bf Example 2.1.1.} This is a special case of the previous example $p_{0}=0, \:\:  b_{s}=p_{1}s$.
$$\\B(s) = p_{1}\dfrac {s(s+1)}{2},$$  
$$\\L(n)=\Bigl\lceil \dfrac{-p_{1}+\sqrt{8 n p_{1}+p_{1}^2}}{2p_{1}}\Bigl\rceil.$$	
\\For $p_{1}=1$ we  obtain  the regular array and popular formula $\seqnum{A002024}$
$$\\L(n)=  \Bigl\lceil \dfrac{-1+\sqrt{8 n +1}}{2}\Bigl\rceil.$$	
\\For $p_{1}=2$ we get irregular array and the formula $\seqnum{A000194}$
$$\\L(n)=  \Bigl\lceil \dfrac{-1+\sqrt{4 n +1}}{2}\Bigl\rceil.$$	
\\We can also solve this problem by using (P2.1). 
$$\\L(n)=  \Bigl\lceil \dfrac{-1+\sqrt{8u +1}}{2}\Bigl\rceil, \:\: \text{where} \:\: u=\Bigl\lfloor \frac{n-1}{p_{1}} \Bigr\rfloor +1.$$
\\Article [4] presents an alternative method 
$$\\L(n)=\Bigl\lceil\sqrt{\Bigl\lceil{\frac{2n}{p_{1}}}\Bigl\rceil}+\dfrac{1}{2}\Bigl\rceil - 1.$$
%
%
\\{\bf Example 2.1.2.} Cantor's diagonalization is a well-known for numbering infinite arrays. In this example, we propose a generalization of the Cantor numbering method for two adjacent diagonals. A pair of neighboring diagonals are combined into one block. 
The sequences $\alpha = \xi$. The partitioning sequence $\beta$ is $b_{s}= 4s-1,
p_{1}=4, \:\: p_{0}=-1,\:\:  \seqnum{A004767}: \:\:  3, 7, 11, 15, 19,... $ Then
$$\\L(n) = \Bigl\lceil \frac{-1 + \sqrt{8n + 1}}{4}\Bigl\rceil.$$
The partial sums $B(s)=s(2s+1)$ is the sequence of second hexagonal numbers [3] , \seqnum{A014105}.
%
%
\\\\{\bf Example 2.1.3.} Let $ d \in \mathbb{Z}^{+},\:\: d>1 $. We shall combine $d$ diagonals into one block, starting with the first diagonal. The sequence $\alpha = \xi$. Then 
$$\\ b_{s}= d^2s -\dfrac {d(d-1)}{2},  \quad B(s)=\dfrac{ds(ds+1)}{2}.$$ 
\\Using (4) for $p_{1}=d^2$ and $p_{0}=-\dfrac {d(d-1)}{2}$ we get
$$\\L(n)=\Bigl \lceil\frac {-1+\sqrt{8n-7}}{2d}\Bigl \rceil, $$ 
\\A second way to solve this problem is to use (P2.3.). 
$$\\L(n)=\lfloor\dfrac {t+d}{d}\rfloor, \;\text{where}\; t=\lfloor \dfrac{\sqrt{8n-7}-1}{2} \rfloor .\\$$ 
\\For $d=3 \; b_{s}=9s-3$ is the sequences $\seqnum{A017233}$ and we obtain
$\\1,1,1,1,1,1,$
$\\2,2,2,2,2,2,2,2,2,2,2,2,2,2,2,$
$\\3,3,3,3,3,3,3,3,3,3,3,3,3,3,3,3,3,3,3,3,3,3,3,3,$
$\\ \quad .\quad .\quad.\\$
%
%
\\{\bf Example 2.1.4.} Now we shall change example 2.1.3. by forming blocks of $d$ adjacent diagonals, starting from the second diagonal, $\alpha = \xi$. Then 
$$\\b_{1}=1, \; b_{s}= d^2(s-1) -\dfrac {d(d-3)}{2} \; \text {for} \; s>1, $$
The partitioning sequence $\beta$ is not linear function. 
$$\\B(0)=0, \; B(s)=\dfrac{(d(s-1)+1)(d(s-1)+2)}{2} \; \text {for} \; s>1. $$ 
Using [3] we obtain
$$\\L(n)=\Bigl \lceil\frac {2d-3+\sqrt{8n+1}}{2d}\Bigl \rceil. $$ 
\\We can solve this problem using a modified version of (P2.3.). Let a sequences $\widetilde{\beta}:
\:\: \widetilde{b_{1}}=b_{1}, \:\: \widetilde{b_{2}}=b_{2}+... \: b_{m+1},\:\: \widetilde{b_{3}}=b_{m+2}+b_{m+3}+... \: b_{2m+1}, \: ...$. Then
$$\\L(n)=\lfloor\dfrac {t+d-1}{d}\rfloor+1, \:\: \text{where}\ \:\: t=\lfloor \dfrac{\sqrt{8n-7}-1}{2} \rfloor .\\$$
\\For $d=3$ the sequence $\beta$  is $b_{1}=1, \; b_{s}=9(s-1), \; \text{for} \; s>1$ and we get
$\\1,$
$\\2,2,2,2,2,2,2,2,2,$
$\\3,3,3,3,3,3,3,3,3,3,3,3,3,3,3,3,3,3,$
$\\ \quad .\quad .\quad.\\$
%
%
%
\\{\bf Example 2.2.} Let the partitioning sequence $\beta$ is quadratic function 
\\$b_{s}=p_{2}s^2 + p_{1}s + p_{0}, \:\: \text{where} \:\: p_{0}, \:\: p_{1} \in \mathbb{Z},\:\: p_{2}  \in \mathbb{Z}^{+}$. 
$$\\B(s)=p_{2}\frac{s(s+1)(2s+1)}{6}+p_{1}\frac{s(s+1)}{2}+p_{0}s.$$
For the cubic equation
$$\\2p_{2}x^3 + (3p_{2}+3p_{1})x^2 + (p_{2}+3p_{1}+6p_{0})x - 6n = 0$$
\\we use Cardano's formula [5].
\begin{equation}  
L(n) =
\\ \Biggl\lceil-\frac{p_{1}+p_{2}}{2p_{2}} - \frac{U}{3\cdot2^{2/3}\cdot p_{2}
\sqrt[3]{V+{\sqrt{4U^3+V^2}}}}
+\frac{1}{6 \cdot 2^{1/3}\cdot p_{2}}\sqrt[3]{V+{\sqrt{4U^3+V^2}}}\Biggl\rceil,
\end{equation}
\\where
$$\\U= 3(- 3p_{1}^2 + 12p_{0}p_{2} - p_{2}^2),$$
$$\\V = 54(- p_{1}^3 + 6p_{0}p_{1} + 12np_{2}^2 + 6p_{0}p_{2}^2 + p_{1}p_{2}).$$
$$\\R(n)=n- p_{2} \dfrac {(L(n)-1)L(n)(2(L(n)-1)+1}{6}-p_{1}\dfrac {(L(n)-1)L(n)}{2} - p_{0}(L(n)-1).$$
\\{\bf Example 2.2.1.} This is a special case of the previous example $p_{2}=1, 
\\p_{1}=0, \:\: p_{0}\ge 0, \:\:  b_{s}=s^2+p_{0}$. Using (1) and (5) we get
$$\\B(s) =\frac{s(s+1)(2s+1)}{6}+ p_{0}s,$$  
$$\\U=36p_{0}-3, \quad V=648n+324p_{0}. \quad \text{then}$$
\\ The discriminant $\Delta=-(4(36p_{0}-3)^3+(648n+324p_{0})^2)<0$
and so the cubic equation has one real root and two non-real complex conjugate roots.
$$\\L(n) =
\\ \Biggl\lceil-\frac{1}{2} - \frac{36p_{0}-3}{3\cdot2^{2/3} 
	\sqrt[3]{648n+324p_{0}+{\sqrt{4(36p_{0}-3)^3+(648n+324p_{0})^2}}}}$$
$$\\+\frac{1}{6 \cdot 2^{1/3}}\sqrt[3]{648n+324p_{0}+{\sqrt{4(36p_{0}-3)^3+(648n+324p_{0})^2}}}\Biggl\rceil,$$
\\For $p_{0}=0$ we obtain the formula for $\seqnum{A074279}:$ 
$$\\ L(n)= \Biggl\lceil {\frac{1}{2} \left( -1 + \frac{1}{3^{1/3}W} + \frac{W}{3^{2/3}} \right) }\Biggl\rceil,$$ 
$$\\ \text{where} \:\: W=(108 n+\sqrt{3}\sqrt{-1+3888 n^2})^{1/3}.$$
\\For $p_{0}=1$ we obtain $L(n):$
$\\1,1,$
$\\2,2,2, 2,2,$
$\\3,3,3, 3,3,3, 3,3,3, 3,$
$\\ \quad .\quad .\quad.\\$
\\{\bf Example 2.2.2.} Let the sequence $\beta$ is quadratic function with the coefficients 
$$\\p_{2}=\dfrac{m-2}{2}, \quad p_{1}=-\dfrac{m-4}{2}, \quad p_{0}=0, \quad m \in \mathbb{Z}^{+}, \: m \ge 3.$$
\\Then  $b_{s}=\dfrac{(m-2)s^2-(m-4)s}{2}$ 
form the sequence of polygonal numbers [3]. Using (1) and (5) we get
$$\\B(s)=(m-2)\frac{s(s+1)(2s+1)}{12} - (m-4)\frac{s(s+1)}{4}.$$
The cubic equation takes the form $(2m-4)x^3 + 6x^2-(2m-10)x -12n=0.$ Then
$$\\U=-156+84m-12m^2, $$
$$\\V=-2592+1512 m-216 m^2+5184 n-5184 m n+1296 m^2 n,$$
$$\\L(n) =
\Biggl\lceil-\frac{1}{m-2} - \frac{U}{3\cdot2^{2/3}\cdot (m-2)
	\sqrt[3]{V+{\sqrt{4U^3+V^2}}}}$$
$$\\+\frac{1}{6 \cdot 2^{1/3}\cdot (m-2)}\sqrt[3]{V+{\sqrt{4U^3+V^2}}}\Biggl\rceil,$$
\\For $m > 19,$ the cubic polynomial is in casus irreducibilis, with three distinct real roots. Therefore, we must use a trigonometric solution to find the roots.
\\For $m=5$ the sequence  $b_{s}$ is the sequence of pentagonal numbers $\seqnum{A000326}$.  
\\Then $L(n):$
$\\1,$
$\\2,2,2,2,2,$
$\\3,3,3,3,3,3,3,3,3,3,3,3,$
$\\ \quad .\quad .\quad.\\$
\\{\bf Example 2.2.3.} Let the sequence $\beta$ is quadratic function with the coefficients 
$$\\p_{2}=\dfrac{m}{2}, \quad p_{1}=-\dfrac{m}{2}, \quad p_{0}=1, \quad m \in \mathbb{Z}^{+}.$$
\\Then  $b_{s}=m\dfrac{s^2-s}{2}+1$ 
form the sequence of centered polygonal numbers [3]. Using (1) and (5) we get
$$\\B(s)=m\frac{s(s+1)(2s+1)}{12} - m\frac{s(s+1)}{4} + s.$$
The cubic equation takes the form $mx^3 + (6-m)x-6n=0.$ Then
$$\\L(n) =  \Biggl\lceil-\frac{2^{1/3} (6-m)}{\sqrt[3]{162 m^2 n+\sqrt{108 (6-m)^3 m^3+26244 m^4 n^2}}}+$$
$$\\ \frac{{\sqrt[3]{162 m^2 n+\sqrt{108 (6-m)^3 m^3+26244 m^4 n^2}}}}{3\cdot 2^{1/3}\cdot m}\Biggl\rceil$$
\\For  $m > 24,$ the cubic polynomial is in casus irreducibilis, with three distinct real roots. Consequently, we employ trigonometric solution to find the roots.
\\For $m=5$ the sequence $b_{s}$ is the sequence of centered pentagonal numbers $\seqnum{A005891}.$ Then $L(n):$
$\\1,$
$\\2,2,2,2,2,2,$
$\\3,3,3,3,3,3,3,3,3,3,3,3,3,3,3,3,$
$\\ \quad .\quad .\quad.\\$
%
%
%
\\{\bf Example 2.3.} Let the partitioning sequence $\beta$ is cubic function 
\\$b_{s}=p_{3}s^3 + p_{2}s^2 + p_{1}s + p_{0}, \:\: \text{where} \:\: p_{0}, p_{1}, p_{2} \in \mathbb{Z}, \:\: p_{3}  \in \mathbb{Z}^{+}$. 
$$\\B(s)=p_{3}\frac{s^2(s+1)^2}{4}+p_{2}\frac{s(s+1)(2s+1)}{6}+p_{1}\frac{s(s+1)}{2}+p_{0}s.$$
There are formulas [5],[6] for solving the 4th degree equation
$$\\3p_{3}x^4 + (6p_{3}+4p_{2})x^3 + (3p_{3}+6p_{2}+6p_{1})x^2 + (12p_{0}+6p_{1}+2p_{2})x - 12n = 0.$$
A different approach is to use numerical solutions of equation.
%
%
%
\\\\{\bf Example 2.3.1.} This is a special case of the previous example $p_{3}=1, 
\\p_{1}=0, \:\: p_{0}\ge 1, \:\:  b_{s}=s^3+p_{0}$. For $p_{0}=1$ we get the equation 
\\ $x^{2}(x+1)^{2}+4x-4n=0.$ Using (3) we obtain $L(n):$
$\\1, 1,$
$\\2,2,2, 2,2,2, 2,2,2,$
$\\3,3,3, 3,3,3, 3,3,3, 3,3,3, 3,3,3, 3,3,3, 3,3,3, 3,3,3, 3,3,3, 3,$
$\\ \quad .\quad .\quad.\\$
%
%
%
\\{\bf Example 2.3.2.} Let the sequence $\beta$ is cubic function with the coefficients 
$$\\p_{3}=\dfrac{m-2}{6}, \quad p_{2}=\dfrac{1}{2}, \quad p_{1}=-\dfrac{m-5}{6}, \quad p_{0}=0, \quad m \in \mathbb{Z}^{+}, \: m \ge 3.$$ Then  
$$\\b_{s}=\dfrac{1}{6}s(s+1)((m-2)s-(m-5))$$ 
\\form the sequence of pyramidal numbers [3]. For $m=5$ we get the sequence of pentagonal pyramidal numbers $\seqnum{A002411}$ and the equation 
$$\\x(x^{3} + 4x^{2}+ 4x+1) -12n =0.$$
\\Using (3) we obtain $L(n):$ 
$\\1,$
$\\2,2,2,2,2,2,$
$\\3,3,3,3,3,3, 3,3,3,3,3,3, 3,3,3,3,3,3$
$\\ \quad .\quad .\quad.\\$
%
%
%
\\{\bf Example 2.4.} Let the partitioning sequence $\beta$ is 
$$b_{s}=(m-1)m^{s-1} \: \text{for} \: m > 1 \:\text{and}\: s\geq 1.$$ 
Using (1), (2) and (3) we get
$$\\B(s) = m^{s} - 1, \quad L(n)=\lceil \log_{m}(n+1) \rceil,$$
$$\\R(n)= n - m^{\lceil \log_{m}(n+1) \rceil-1}+1.$$
\\The sequences $\seqnum{A029837}$ and $\seqnum{A081604}$ are examples of sequences generated by $m=2$ and $m=3$, respectively.
\section{Intra-Block Permutation of Integer Positive Numbers}
Let $\beta$ is the partitioning sequence. The sequence $\xi$ is written as irregular array read by rows:
$\\ B(0)+1, B(0)+2, \: ... \: , B(0)+b_{1},$
$\\ B(1)+1,B(1)+2, \: ... \:, B(1)+b_{3},$
$\\ \quad .\quad .\quad.\\$
$\\B(k)+1, B(k)+2, \: ... \:, B(k)+b_{k+1}$.
$\\ \quad .\quad .\quad.\\$
\\{\bf Definition 3.1.} A sequence $\alpha \in \mathcal{A}^{+}$ is called an intra-block permutation of integer positive numbers if it maps each block $(B(k)+1, B(k)+2, \: ... \:, B(k)+b_{k+1})$ to itself.
\\This means that each block of the sequences $\alpha$
$\\a_{B(k)+1}, a_{B(k)+2}, ..., a_{B(k)+b_{k+1}}$ is a permutation of the numbers 
$\\B(k)+1, B(k)+2, \: ... \:, B(k)+b_{k+1}$.
\\\\Denote by $\pi(n)$ a permutation of the first $n$ natural numbers $(p_{1},p_{2},p_{3},...p_{n})$. 
The group of all permutations  $\pi(n)$ is denoted by $S_n$, is called the symmetric group of degree $n$. 
The order of a permutation $\pi$, denoted by $o(\pi)$, is defined as the
smallest positive integer $m$ such that $\pi^{m} = id$ [5].
\\The set of numbers 
$$\\a_{B(k)+1}-B(k), a_{B(k)+2}-B(k), ..., a_{B(k)+b_{k+1}}-B(k)$$
is a permutation $\pi(b_{k+1}).$
\\\\The sequence $\alpha$ is determined by the sequence $\beta$ and the set of permutations \\$\pi(b_{1}),\pi(b_{2}),\pi(b_{3})... \:\: $ Let $\alpha \circ \alpha$ is self-composition $\alpha$($\alpha$) of the sequence $\alpha$ [7]. This operation is equivalent to multiplying permutations.
$$\\ \pi(b_{1}) \circ \pi(b_{1}), \:\: \pi(b_{2}) \circ \pi(b_{2}),\:\: \pi(b_{3}) \circ \pi(b_{3}),...$$ 
The sequence $\xi$ consists of identity permutations.
\\\\{\bf Definition 3.2.} The order of a sequence $\alpha$, denoted by $o(\alpha)$, is the smallest positive integer $m$ such that $m$ times self-composition $\alpha^{m} = \xi$.
\\\\The following properties hold.
\\\textbf{(P3.1.)} The sequence $\alpha$ is permutation of the natural numbers.
\\\textbf{(P3.2.)} The order of $\alpha \:\:\: o(\alpha)=LCM (o(\pi(b_{1}),o(\pi(b_{2}),o(\pi(b_{3}),...  )$.
\\\textbf{(P3.3.)} The sequences $\alpha, \alpha^{2}, \alpha^{3}, ...$ form a cyclic group.  
\\\\Let a sequence $\gamma$: $g_{1}, g_{2}, g_{3},...\quad \in$ $\mathcal{A}^{+}$ such that
$\\g_{1} + g_{2} + ... + + g_{m_{1}} =b_{1},$
$\\g_{m_{1}+1} + g_{m_{1}+2} + ... + g_{m_{1}+m_{2}} =b_{2},$
$\\g_{m_{1}+m_{2}+1} + g_{m_{1}+m_{2}+2} + ... + + g_{m_{1}+m_{2}+m_{3}} =b_{3},$
$\\ \quad .\quad .\quad.\\$
Thus, the sequence $\gamma$ partitions the sequence $\xi$ into blocks, such that each block of the sequence $\beta$ is a collection of disjoint blocks of $\gamma$, whose union is the block $\beta$.
We denote by $\gamma \leq \beta.$ Let a sequence $\mu \in \mathcal{A}^{+}$ is an intra-block permutation of integer positive numbers for partitioning sequence $\gamma.$  
\\\\\textbf{(P3.4.)} The sequences $\mu$ is intra-block permutation for the partitioning sequence $\beta.$  
\\\textbf{(P3.5.)} The set of sequences $\mathcal{A}^{+}$,  equipped with a binary relation $\leq$, form partially ordered set [8]. The minimal element is the sequence $(1,1,1,...) \quad \seqnum{A000012}.$
\\\textbf{(P3.6.)} The sequences $\alpha \circ \mu$ is intra-block permutation for the partitioning sequence $\beta.$  
\\\\In all examples in this section, we shall use the partitioning sequence from example 2.1.2. $\:\: \beta: b_{s}= 4s-1 \:\: \text{for} \:\: s \geq 1$. All permutations  $\pi(b_{1}),\pi(b_{2}),\pi(b_{3})... \:\:$ have odd length. The sequence $\alpha = \xi$. 
%
%
\\\\{\bf Example 3.1.} Terms of the $\pi(n): \:\: p_{i}=R^{'}(i).$ The order of permutations $o(\pi(b_{s}))=2 \:\: \text{for} \:\: s \geq 1.$ So $o(\alpha)=2$ and the 
sequence $\alpha$ is self-inverse permutation of the natural numbers. The sequence as irregular array begins
$\\3,2,1,$
$\\10,9,8,7,6,5,4,$
$\\21,20,19,18,17,16,15,14,13,12,11,$
$\\ \quad .\quad .\quad.\\$
%
%
\\{\bf Example 3.2.} The formula for terms of the $\pi$:
\\$p(i)=\begin{cases}
R^{'}(i),&\text{if $R^{'}(i)\geq R(i)+1$},\\
R(i)-\Bigl \lfloor \dfrac{R(i)+R^{'}(i)-1}{2} \Bigr \rfloor, &\text{if $R^{'}(i) < R(i)+1$}.\\
\end{cases}$
\\The order of permutations $o(\pi(b_{1}))=3, \:\: o(\pi(b_{s}))=12 \:\: \text{for} \:\: s \geq 2.$
\\Thus $o(\alpha)=12$. The sequence begins
$\\3,1,2,$
$\\10,9,8,4,5,6,7,$
$\\21,20,19,18,17,11,12,13,14,15,16,$
$\\ \quad .\quad .\quad.\\$
%
%
\\{\bf Example 3.3.} The formula for terms of the $\pi$:
\\$p_{i} =\begin{cases}
	\Bigl\lfloor\ \dfrac{4L(i)-1}{2} \Bigl\rfloor+R(i)+1,&\text{if $R(i) > R^{'}(i)$},\\
\\	R(i) - \Bigl\lfloor \dfrac{4L(i)-1}{2} \Bigl\rfloor,&\text{if $R(i) \leq R^{'}(i)$.}
\end{cases}$
\\The order of permutations $o(\pi(b_{s}))=b_{s} \:\: \text{for} \:\: s \geq 1.$ So the sequence $\alpha$ has infinite order. Another formula: 
$$\\a(n)= \dfrac {(i+j-1)^2+i-j+3 +2(i + j - 1)(-1)^{i + j}}{2},$$ 
\\where
$$\\i=n-\dfrac {t(t+1)}{2}, \quad j= \dfrac {t^2+3t+4}{2}-n, \quad t=\lfloor \dfrac{\sqrt{8n-7}-1}{2} \rfloor .$$ 	 
\\The start of the sequence $\alpha$:
$\\3,1,2,$
$\\8,9,10,4,5,6,7,$
$\\17,18,19,20,21,11,12,13,14,15,16$
$\\ \quad .\quad .\quad.\\$
\section{Generalized reluctant sequences}
{\bf Definition 4.1.} Let sequences $\alpha$ $\in$ $\mathcal{A}$ and $\omega$ $\in$ $\mathcal{A}^{+}$. The sequence $\omega$ is called the reluctant sequence of sequence $\alpha$, if $\omega$ is the triangle array read by rows, with row number $k$ coinciding with the first $k$ elements of the sequence $\alpha$ [2].
\\Formula for a reluctant sequence is:
$$\\\omega(n)=a_{m},\:\:\: \text{where} \:\:\: m=n-\dfrac {t(t+1)}{2}, \:\:\:  t=\lfloor \dfrac{\sqrt{8n-7}-1}{2} \rfloor. $$
\\{\bf Definition 4.2.} Let sequences $\alpha$ $\in$ $\mathcal{A}$ and $\omega$ $\in$ $\mathcal{A}^{+}$. The sequence $\omega$ is called the reverse reluctant sequence of sequence $\alpha$, if $\omega$ is the triangle array read by rows, with row number $k$ coinciding with the first $k$ elements of the sequence $\alpha$ in reverse order [2].
\\Formula for a reverse reluctant sequence is:
$$\\ \omega(n)=a_{m}, \:\:\: \text{where} \:\:\: m=\dfrac {t^2+3t+4}{2}-n, \:\:\: t=\lfloor \dfrac{\sqrt{8n-7}-1}{2} \rfloor. $$
\\\\Let $q \in\mathbb{Z}^{+}$. Denote by $(a_{k+1},a_{k+2},a_{k+3},... \; a_{k+m})^q$
\\$q$ times concatenation of the block $a_{k+1},a_{k+2},a_{k+2},...\; a_{k+m}:$
\\$\underbrace{a_{k+1},a_{k+2},a_{k+3},...\; a_{k+m},\:\:\: a_{k+1},a_{k+2},a_{k+3},...\; a_{k+m}, \:\:\:  ... \:\:\: a_{k+1},a_{k+2},a_{k+3},... \; a_{k+m}}_{\text{$q$ times}} $ 
\\\\{\bf Definition 4.3.} Let a sequence $\alpha$: $a_{1}, a_{2}, a_{3},...\:\in \mathcal{A}.$ A sequences $\beta$: 
\\$b_{1}, b_{2}, b_{3},...$ $\in$ $\mathcal{A}^{+}$ \:\text{and} \:$q \in\mathbb{Z}^{+} $. The sequence $\omega$ is called the generalized reluctant sequence of sequences $\alpha$ if $\omega$ is irregular array read by rows:
\begin{equation}
   \begin{split}
&\qquad(a_{1}, a_{2}, \dots \: a_{b_{1}})^{q},\\ 
&\qquad(a_{1}, a_{2}, \dots \: a_{b_{1}}, a_{b_{1}+1}, \dots \: a_{b_{1}+b_{2}})^{q},\\
&\qquad(a_{1}, a_{2}, \dots \: a_{b_{1}}, a_{b_{1}+1}, \dots \: a_{b_{1}+b_{2}}, a_{b_{1}+b_{2}+1},a_{b_{1}+b_{2}+2},\dots \: a_{b_1+b_2+b_3})^{q},\\   
&\qquad \dots 
   \end{split}
\end{equation}
\\{\bf Definition 4.4.} Let a sequence $\alpha$: $a_{1}, a_{2}, a_{3},...\:\in \mathcal{A}.$ A sequences $\beta$: 
\\$b_{1}, b_{2}, b_{3},...$ $\in$ $\mathcal{A}^{+}$ \:\text{and} \:$q \in\mathbb{Z}^{+} $. The sequence $\omega^{'}$ is called the generalized reverse reluctant sequence of sequences $\alpha$ if $\omega^{'}$ is irregular array read by rows:
\begin{flalign*}
&\qquad (a_{b_1}, a_{b_1-1}, \dots, a_{1})^{q}, &&\\
&\qquad (a_{b_1+b_2}, a_{b_1+b_2-1}, \dots \, a_{b_1}, a_{b_1-1}, \dots \, a_{1})^{q}, &&\\
&\qquad (a_{b_1+b_2+b_3}, a_{b_1+b_2+b_3-1}, \dots \, a_{b_1+b_2}, a_{b_1+b_2-1}, \dots \, a_{b_1}, a_{b_1-1}, \dots \, a_{1})^{q}, &&\\
&\qquad \dots &&\\
\end{flalign*}
As an illustration, consider the following examples. Let $\alpha = \xi$, a partitioning sequences $\gamma$ is increasing $g_{s} < g_{s+1}, \:\:\text{for} \:\:\ s \geq 1 $. Then
\\The sequence $R(n)$: 
$\\ 1,2, ...\: g_{1},$ 
$\\ 1,2, ...\: g_{1}, ...\: g_{2},$ 
$\\ 1,2, ...\: \: g_{1}, ...\: g_{2}, ...\: g_{3},$ 
$\\ \quad .\quad .\quad.\\$
is generalized reluctant sequence of sequences $\xi$ \: for \: $q=1$. 
\\Similarly, for the sequence $R^{'}(n)$:
$\\ g_{1}, g_{1}-1, ...\: 1,$ 
$\\ g_{2}, g_{2}-1, ...\: g_{1}, g_{1}-1, ...\: 1,$ 
$\\ g_{3}, g_{3}-1, ...\: g_{2}, g_{2}-1, ...\: g_{1}, g_{1}-1, ...\: 1,$ 
$\\ \quad .\quad .\quad.\\$
is generalized reverse reluctant sequence of sequences $\xi$ for same $\gamma \:$ and $\: q=1$. 
\\\\If the sequence $\beta = \seqnum{A000012}$ and $q=1$ generalized reluctant sequence becomes reluctant sequence $\seqnum{A002260}.$  Similarly, for the same sequence $\beta$ and $q$ generalized reverse reluctant sequence becomes reverse reluctant sequence  $\seqnum{A004736}.$  
\\\\There are some examples generalized reluctant sequence 
\\for $q=1 \quad  \text{and} \quad  \beta: 
\\b_{1}=1, \:\: b_{s}=2 \:\: \text{for} \:\: s \geq 2 \:\: \seqnum{A071797},
\\b_{1}=1, \:\: b_{s}=2s-1 \:\: \text{for} \:\: s \geq 2 \:\: \seqnum{A064866},
\\b_{1}=1, \:\: b_{s}=2^{s-2} \:\: \text{for} \:\: s \geq 2 \:\: \seqnum{A062050},$
\\for $ \:\: q=2 \:\: \quad  \text{and} \quad  \beta: 
\\b_{s}=1, \:\: s \geq 1 \:\: \seqnum{A122197}.$
\\The example of generalized reverse reluctant sequence is $\seqnum{A080883}
\\ \text{for}  \:\: q=1 \:\:  \text{and} \:\: \beta: \:\: b_{1}=1, \:\: b_{s}=2 \:\: \text{for} \:\: s \geq 2.$  
\\\\Let's create a formula to calculate $L$. Denote by $\zeta: c_{1}, c_{2}, c_{3},...$ the partitioning sequence for the array (6), where $c_{s}=qB(s).$  Denote by  $C(s)$ partial sums $\zeta$:
$$\\C(s)=0, \:\: C(s)= c_{1}+c_{2}+...+c_{s}.$$ 
\\Using (2) and (3) we get 
$$L(n)=\lceil y(n) \rceil,$$ 
\\where $y(n)$ is the largest  root of the equation $C(x)=n,$ 
$$\\R(n) = n- C(L(n)-1), \:\: R^{'}(n) = C(L(n))+1-n.$$
Let's develop a formula for finding the term of the array (6). The term $\omega(n)$ is located in the row $L(n)$ at the place $R(n)$. The row $L(n)$ contains the block of terms 
$$\\a_{1}, a_{2}, ... \: a_{B(L(n))}$$
repeated $q$ times. This row is numbered $R(n)$ from 1 to $c_{L(n)}=qB(L(n))$. 
\\Then generalized reluctant sequence of sequences $\omega(n) =a(m), \:\: \text{where}$ 
$$\\m=1+ (R(n)-1) \:\: \text{mod} \:\:B(L(n)).$$
\\Similarly, the generalized reverse reluctant sequence of sequences $\omega^{'}(n) =a(m^{'}), \:\: \text{where}$ 
$$\\m^{'}=1+(R^{'}(n)-1) \:\: \text{mod} \:\:B(L(n)).$$
%
%
\\\\{\bf Example 4.0.} Let $p, q \in \mathbb{Z}^{+}, \text{the sequences} \:\: \beta$: $\:\: b_{s}=p$ $\:\: \text{for} \:\: s\geq 1$.
\\Then 
$$\\B(s)=ps, \:\: c_{s}=pqs, \:\: C(s)=pq\dfrac{s(s+1)}{2},$$ 
$$\\L(n)=\Bigl\lceil \dfrac{-pq+\sqrt{8npq+p^2q^2}}{2pq} \Bigr\rceil,$$
$$\\R(n)=n-pq\dfrac{(L(n)-1)L(n)}{2},\quad R^{'}(n)=pq\dfrac{L(n)(L(n)+1)}{2} +1 - n .$$
\\We get for generalized reluctant sequence $\omega$ and reverse reluctant sequence $\omega^{'}$:
$$\\m=1+ (R(n)-1) \:\: \text{mod} \:\:  pL(n),$$ 
$$\\m^{'}=1+ (R^{'}(n)-1) \:\: \text{mod} \:\:  pL(n).$$ 
\\Let the sequence $\alpha = \xi$,  $\beta$: $b_{s}=2$ $\: \text{for} \: s\geq 1$  $\:\text{and} \:\: q=3$. 
\\Then generalized  reluctant sequence  $\omega$:
$\\1,2,\:\: 1,2,\:\: 1,2,  
\\1,2,3,4, \:\: 1,2,3,4, \:\: 1,2,3,4,
\\1,2,3,4,5,6, \:\: 1,2,3,4,5,6,\:\: 1,2,3,4,5,6
\\ \quad .\quad .\quad.\\$
Generalized  reverse reluctant sequence  $\omega^{'}$:
$\\2,1\:\: 2,1\:\: 2,1,  
\\4,3,2,1, \:\: 4,3,2,1  \:\: 4,3,2,1,
\\6,5,4,3,2,1 \:\: 6,5,4,3,2,1,\:\: 6,5,4,3,2,1,
\\ \quad .\quad .\quad.\\$
%
%
\\\\{\bf Example 4.1.} Let $p_{1}, q \in \mathbb{Z}^{+}, \text{the sequences} \:\: \beta$: $\:\: b_{s}=p_{1}s\:\: \text{for} \:\: s\geq 1$  $\:\: \text{and} \:\: q \geq 1$.
\\Then 
$$\\B(s)=p_{1}\dfrac{s(s+1)}{2}, \:\: c_{s}=p_{1}q\dfrac{s(s+1)}{2}, \:\: C(s)=p_{1}q\dfrac{s(s+1)(s+2)}{6},$$ 
Using Cardano’s formula [5] we get
$$L(n)=\Bigl \lceil-1 + \dfrac{p_{1}q}{\sqrt[3]{3}U}+\dfrac{U}{\sqrt[3]{3^2}p_{1}q} \Bigr \rceil,$$
$$\text{where}\:\: U= \left(27np_{1}^2q^2 + \sqrt{3} \sqrt{243n^2p_{1}^4q^4 - p_{1}^6q^6}\right)^{1/3}.$$
$$\\R(n)=n-p_{1}q\dfrac{(L(n)-1)L(n)(L(n)+1)}{6},$$   $$\\R^{'}(n)=p_{1}q\dfrac{L(n)(L(n)+1)(L(n)+2)}{6}+1-n $$
\\We get for generalized reluctant sequence $\omega$ and reverse reluctant sequence $\omega^{'}$:
$$\\m=1+ (R(n)-1) \:\: \text{mod} \:\:  p_{1}\dfrac{L(n)(L(n)+1)}{2},$$ 
$$\\m^{'}=1+ (R^{'}(n)-1) \:\: \text{mod} \:\:  p_{1}\dfrac{L(n)(L(n)+1)}{2}.$$ 
\\If the sequence $\alpha = \xi$,  $\beta$: $b_{s}=2s$ $\: \text{for} \: s\geq 1$  and $ \:\: q=3$. 
\\Then generalized  reluctant sequence  $\omega$:
$\\1,2,\:\: 1,2,\:\:  1,2,
\\1,2,3,4,5,6, \:\: 1,2,3,4,5,6,\:\: 1,2,3,4,5,6
\\1,2,3,4,5,6,7,8,9,10,11,12, \: 1,2,3,4,5,6,7,8,9,10,11,12, \: 1,2,3,4,5,6,7,8,9,10,11,12
\\ \quad .\quad .\quad.\\$
Generalized  reverse reluctant sequence  $\omega^{'}$:
$\\2,1, \:\: 2,1,\:\: 2,1,
\\6,5,4,3,2,1, \:\: 6,5,4,3,2,1, \:\: 6,5,4,3,2,1, 
\\ 12,11,10,9,8,7,6,5,4,3,2,1, \:\: 12,11,10,9,8,7,6,5,4,3,2,1, \:\: 12,11,10,9,8,7,6,5,4,3,2,1
\\ \quad .\quad .\quad.\\$
%
%
\\\\{\bf Example 4.2.} Let $p, q \in \mathbb{Z}^{+}, \:\: p \geq  2, \:\: q \geq 1, \text{the sequences} \:\: \beta$: $\:\: b_{1}=p, \:\: b_{s}=p^{s}-p^{s-1}$ $\:\: \text{for} \:\: s\geq 2$. Then
$$\\B(s)=p^{s}, \:\: c_{s}=qp^{s}, \:\: C(s)=\dfrac{pq(p^{s}-1)}{p-1},$$ 
$$\\L(n)=\Bigl\lceil \log_{p}\Bigl (\dfrac{n(p-1)}{pq}+1 \Bigr ) \Bigr\rceil,$$ 
$$\\R(n)=n-pq\dfrac{p^{L(n)-1}-1}{p-1},\quad R^{'}(n)=pq\dfrac{p^{L(n)}-1}{p-1}+1 - n .$$
\\We get for generalized reluctant sequence $\omega$ and reverse reluctant sequence $\omega^{'}$:
$$\\m=1+ (R(n)-1) \:\: \text{mod} \:\:  p^{L(n)},$$ 
$$\\m^{'}=1+ (R^{'}(n)-1) \:\: \text{mod} \:\:  p^{L(n)} .$$ 
\\Let the sequence $\alpha = \xi$,  $\beta:\:\: b_{1}=2, \:\: b_{s}=2^{s}-2^{s-1} \: \text{for} \: s\geq 2$  $\:\text{and} \:\: q=3$. 
\\Then generalized  reluctant sequence  $\omega$:
$\\1,2, \:\: 1,2, \:\: 1,2,
\\1,2,3,4, \:\: 1,2,3,4,\:\: 1,2,3,4,
\\1,2,3,4,5,6,7,8,\:\:1,2,3,4,5,6,7,8,\:\:1,2,3,4,5,6,7,8,
\\ \quad .\quad .\quad.\\$
Generalized  reverse reluctant sequence  $\omega^{'}$:
$\\2,1,\:\: 2,1,\:\:2,1,
\\4,3,2,1, \:\: 4,3,2,1, \:\: 4,3,2,1,
\\8,7,6,5,4,3,2,1, \:\: 8,7,6,5,4,3,2,1, \:\: 8,7,6,5,4,3,2,1,
\\ \quad .\quad .\quad.\\$

E-mail:putievskiy@gmail.com
\end{document}